\newtheorem{thm}{Theorem}[section]
\newtheorem{lem}[thm]{Lemma}
\newtheorem{prop}[thm]{Proposition}
\theoremstyle{definition}
\theoremstyle{remark}
\theoremstyle{conclusion}
\theoremstyle{question}
\numberwithin{equation}{section}
\begin{document}

\title[Fractional H\'enon-Lane-Emden Systems]{Liouville type theorems for fractional and higher-order fractional systems}

\author{Daomin Cao}
\address{School of Mathematics and Information Science, Guangzhou University, Guangzhou 510405, and People’s Republic of China and  Institute of Applied Mathematics, AMSS, Chinese Academy of Sciences, Beijing 100190,
	People’s Republic of China}
\email{dmcao@amt.ac.cn}

\author{Guolin Qin}
\address{Institute of Applied Mathematics, Chinese Academy of Sciences, Beijing 100190, and University of Chinese Academy of sciences, Beijing 100049, P. R. China}
\email{qinguolin18@mails.ucas.ac.cn}
\thanks{D. Cao was supported by NNSF of China (No.11831009) and Chinese Academy of Sciences (No.QYZDJ-SSW-SYS021)}

\begin{abstract}
In this paper, we first establish decay estimates for the fractional and higher-order fractional  H\'enon-Lane-Emden systems by using a nonlocal average and integral estimates,  which deduce a result of non-existence. Next, we apply the method of scaling spheres introduced in \cite{DQ2} to derive a Liouville type theorem. We also construct an interesting example on super $\frac{\alpha}{2}$-harmonic functions (Proposition 1.2).
\end{abstract}

\maketitle{\small {\bf Keywords:} Fractional Laplacians; H\'enon-Lane-Emden systems; Decay estimates; Higher-order fractional Laplacians; Liouville type theorems.\\
	
{\bf 2010 MSC} {Primary: 35J61; Secondary: 35B53, 35C15.}

\section{Introduction}
In this paper, we are concerned with the following fractional and higher-order fractional H\'enon-Lane-Emden systems
\begin{equation}\label{HPDE}\\\begin{cases}
(-\Delta)^{k+\frac{\alpha}{2}}u(x)=|x|^av^p, \,\,\,\,\,\,\,\, x\in\mathbb{R}^{n}, \\
(-\Delta)^{l+\frac{\beta}{2}}v(x)=|x|^bu^q, \,\,\,\,\, x\in\mathbb{R}^{n},\\
u(x)\geq 0, v(x)\geq 0,\,\,\, x\in \mathbb{R}^{n},
\end{cases}\end{equation}
where $u\in C^{2k+[\alpha],\{\alpha\}+\epsilon}_{loc}(\mathbb{R}^{n})\cap\mathcal{L}_{\alpha}(\mathbb{R}^{n}),\,v\in C^{2l+[\alpha],\{\alpha\}+\epsilon}_{loc}(\mathbb{R}^{n})\cap\mathcal{L}_{\alpha}(\mathbb{R}^{n})\,\,\,$ with  $\epsilon>0$ arbitrarily small,  $0<\alpha, \beta<\min\{2,n\}$, $a,b, p,q \geq 0$ are real numbers, $k,l\geq 0$ are integers, $[t]$ denotes the integer part of real number $t$ and $\{t\}:=t-[t]$.

For any $u\in C^{[\alpha],\{\alpha\}+\epsilon}_{loc}(\mathbb{R}^{n})\cap\mathcal{L}_{\alpha}(\mathbb{R}^{n})$, the nonlocal operator $(-\Delta)^{\frac{\alpha}{2}}$ ($0<\alpha<2$) is defined by (see \cite{CFY,CLL,DQ1,DQ2,S})
\begin{equation}\label{nonlocal defn}
(-\Delta)^{\frac{\alpha}{2}}u(x)=C_{n,\alpha} \, P.V.\int_{\mathbb{R}^n}\frac{u(x)-u(y)}{|x-y|^{n+\alpha}}dy:=C_{n}\lim_{\varepsilon\rightarrow0}\int_{|y-x|\geq\varepsilon}\frac{u(x)-u(y)}{|x-y|^{n+\alpha}}dy,
\end{equation}
where the function space
\begin{equation}\label{0-1}
\mathcal{L}_{\alpha}(\mathbb{R}^{n}):=\Big\{u: \mathbb{R}^{n}\rightarrow\mathbb{R}\,\big|\,\int_{\mathbb{R}^{n}}\frac{|u(x)|}{1+|x|^{n+\alpha}}dx<\infty\Big\}.
\end{equation}
Throughout this paper, for $m=k\, \text{or}\, l$ and $0<\alpha<2$, we define $(-\Delta)^{m+\frac{\alpha}{2}}f:=(-\Delta)^{m}(-\Delta)^{\frac{\alpha}{2}}f$ for $f\in C^{2m+[\alpha],\{\alpha\}+\epsilon}_{loc}(\mathbb{R}^{n})\cap\mathcal{L}_{\alpha}(\mathbb{R}^{n})$, where $(-\Delta)^{\frac{\alpha}{2}}f$ is defined by definition \eqref{nonlocal defn}. The assumption  $u\in C^{2k+[\alpha],\{\alpha\}+\epsilon}_{loc}(\mathbb{R}^{n})$ and $ v\in C^{2l+[\beta],\{\beta\}+\epsilon}_{loc}(\mathbb{R}^{n})$ with arbitrarily small $\epsilon>0$ guarantees that $(-\Delta)^{\frac{\alpha}{2}}u\in C^{2k}(\mathbb{R}^{n})$ and $(-\Delta)^{\frac{\beta}{2}}v\in C^{2l}(\mathbb{R}^{n})$ (see \cite{CLM,S}), hence $(u,v)$ is a classical solution to  \eqref{HPDE} in the sense that $(-\Delta)^{k+\frac{\alpha}{2}}u$ and $(-\Delta)^{l+\frac{\beta}{2}}v$ are pointwise well-defined and continuous in the whole $\mathbb{R}^{n}$.

When $k=l=1$ and $\alpha=\beta=a=b=0$, system \eqref{HPDE} is the well-known Lane-Emden system
\begin{equation}\label{LMPDE}\\\begin{cases}
-\Delta u(x)=v^p, \,\,\,\,\,\, x\in\mathbb{R}^{n}, \\
-\Delta v(x)=u^q, \,\,\,\,\, x\in\mathbb{R}^{n}.
\end{cases}\end{equation}
The celebrated Lane-Emden conjecture, which states that  the system \eqref{LMPDE} admits no  non-negative non-trivial solutions provided $p>0, q>0$ and $\frac{1}{p+1}+\frac{1}{q+1}>1-\frac{2}{n}$, has been extensively studied and solved for $n\leq 4$ in \cite{M, PQS, S2, Souto, SZ}. Recently, Li and Zhang \cite{LZ} proved the H\'enon-Lane-Emden conjecture for $n=3$.

For $k=l=0$, $0<\alpha=\beta<2$ and $a=b=0$, \eqref{HPDE} is reduced to the fractional Lane-Emden system
\begin{equation}\label{FLMPDE}\\\begin{cases}
(-\Delta)^{\frac{\alpha}{2}} u(x)=v^p, \,\,\,\,\,\, x\in\mathbb{R}^{n}, \\
(-\Delta)^{\frac{\alpha}{2}} v(x)=u^q, \,\,\,\,\, x\in\mathbb{R}^{n}.
\end{cases}\end{equation}
Naturally, one can also ask whether system \eqref{FLMPDE} admits no non-trivial solutions for $p>0, q>0$ and $\frac{1}{p+1}+\frac{1}{q+1}>1-\frac{\alpha}{n}$ or not. Not much work has been done on this question. As far as we know, the best results  were obtained in \cite{QX} via the method of moving planes and \cite{BA} with probabilistic method.

One can see that decay estimates for second order Lane-Emden system and H\'enon-Lane-Emden system  (i.e. Proposition 2.1 in \cite{SZ} and Lemma 3.3 in  \cite{LZ}) are essential in the proof of Lane-Emden conjecture in dimension 3 and 4 and H\'enon-Lane-Emden conjecture in dimension 3. However, the main technique used in \cite{SZ} and \cite{LZ} can not be applied directly to our problem since the fractional Laplacians are nonlocal.

We overcome this difficulty in the fractional order cases (i.e. $k=l=0$) through an interesting observation. The nonlocal average $\int_{R}^{+\infty}\frac{R^{\alpha}}{r(r^{2}-R^{2})^{\frac{\alpha}{2}}}\overline{u}(r)dr$ plays a basic and important role for the nonlocal fractional Laplacians $(-\Delta)^{\frac{\alpha}{2}}$ ($0<\alpha<2$), which is similar to the role of $\overline{u}(r)$ for the Laplacian $-\Delta$. Instead of deriving decay estimates for the spherical average of solutions via ODE method, we will use integral representation formulae to obtain decay estimates for the nonlocal average of solutions to \eqref{HPDE}, which turn out to be enough to deduce Liouville type theorems. The nonlocal average of solutions has been used to obtain super-poly harmonic properties for higher-order fractional equations in \cite{CDQ} and \cite{DLQ}. In this paper, combined with careful analysis, we will show how to use this observation to derive a prior estimates for fractional systems. We believe that techniques developed here will be useful in the study of fractional equations. In the higher order cases $k,l\geq 1$, applying Theorem 1.4 in \cite{CDQ}, we obtain the equivalence between PDE systems \eqref{HPDE} and the corresponding integral systems \eqref{HIE} when $0<2k+\alpha, 2l+\beta <n$. Then we investigate the corresponding integral systems and establish the desired decay estimates by some basic properties of integral equations (see Proposition \ref{2-2-1}).

Our decay estimates for \eqref{HPDE} are as follows.
\begin{thm}\label{Upper}
	Let $(u,v)$ be a non-negative solution of system \eqref{HPDE}. If $p,q\geq 1$ and $pq>1$, then there exists a positive constant $M=M(k,l,\alpha,\beta,a,b,p,q,n)$ such that, for $R>0$,
	\begin{align}\label{UB}
	\int_{R}^{+\infty}\frac{R^{\alpha}}{r(r^{2}-R^{2})^{\frac{\alpha}{2}}}\overline{u}(r)dr&\leq MR^{-\frac{2k+\alpha+a+p(2l+\beta+b)}{pq-1}},\\ \int_{R}^{+\infty}\frac{R^{\beta}}{r(r^{2}-R^{2})^{\frac{\beta}{2}}}\overline{v}(r)dr&\leq MR^{-\frac{2l+\beta+b+q(2k+\alpha+a)}{pq-1}},\nonumber
	\end{align}
	where the spherical average $\overline{u}$ is taken with respect to $0$. Furthermore, for $k,l \geq 1$,  estimate \eqref{UB} can be local
	\begin{align}\label{UB2}
	\overline{u}(R)\leq MR^{-\frac{2k+\alpha+a+p(2l+\beta+b)}{pq-1}},\qquad \overline{v}(R)\leq MR^{-\frac{2l+\beta+b+q(2k+\alpha+a)}{pq-1}},\,\,\,\forall\,\, R>0.
	\end{align}
\end{thm}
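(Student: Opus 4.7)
I would split the proof into the two regimes $k=l=0$ (pure fractional) and $k,l\geq 1$ (higher-order), treating the former through a nonlocal Poisson representation on balls and the latter through the equivalent integral system. For $k=l=0$, since $(-\Delta)^{\alpha/2}u=|x|^av^p\geq 0$ and $(-\Delta)^{\beta/2}v=|x|^bu^q\geq 0$, both $u$ and $v$ are $\alpha/2$- and $\beta/2$-superharmonic. Taking spherical averages around the origin and exploiting that $(-\Delta)^{\alpha/2}$ commutes with rotations gives $(-\Delta)^{\alpha/2}\bar u(r)=r^a\overline{v^p}(r)\geq r^a\bar v(r)^p$ by Jensen's inequality (which uses $p\geq 1$), and symmetrically for $\bar v$. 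I then apply the Green--Poisson representation for $(-\Delta)^{\alpha/2}$ on $B_R$: for $x\in B_R$,
\[
u(x)=\int_{B_R}G_R^{\alpha}(x,y)\,|y|^av^p(y)\,dy+\int_{B_R^c}P_R^{\alpha}(x,y)\,u(y)\,dy,
\]
and at $x=0$ the exterior Poisson kernel is $P_R^{\alpha}(0,y)=c_{n,\alpha}R^\alpha(|y|^2-R^2)^{-\alpha/2}|y|^{-n}$. Passing to polar coordinates in the Poisson term turns it into precisely a constant multiple of the nonlocal average $\int_R^{+\infty}\frac{R^\alpha}{r(r^2-R^2)^{\alpha/2}}\bar u(r)\,dr$ appearing in \eqref{UB}, identifying that quantity as the exterior contribution to $u(0)$.

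To extract decay I let $R\to\infty$ (justified via the super-poly-harmonic property) to obtain the Riesz representation $u(0)=c\int_{\mathbb R^n}|y|^{a-(n-\alpha)}v^p(y)\,dy$, and subtract from the finite-$R$ formula to produce an upper bound of the form
\[
\int_R^{+\infty}\frac{R^\alpha}{r(r^2-R^2)^{\alpha/2}}\bar u(r)\,dr\leq C\int_{|y|>R}|y|^{a-(n-\alpha)}v^p(y)\,dy+\text{interior correction},
\]
and symmetrically for the $v$-nonlocal average. Crucially, the monotonicity properties inherited from $\alpha/2$-superharmonicity imply that the nonlocal average at scale $r$ dominates $\bar v(2r)$ (and similarly for $u$), so these two estimates couple into a closed pair of integral inequalities between the nonlocal averages of $u$ and $v$. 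Substituting one into the other, using the scaling of the weight and the power of the nonlinearity, produces a self-improving inequality for a single quantity; the hypothesis $pq>1$ then lets one bootstrap the exponent up to the sharp decay rate, which by a scaling balance matches $\frac{\alpha+a+p(\beta+b)}{pq-1}$ (and its symmetric partner), recovering \eqref{UB} for $k=l=0$.

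For $k,l\geq 1$, I would apply Theorem~1.4 of \cite{CDQ} to pass to the equivalent integral system
\[
u(x)=\int_{\mathbb R^n}\frac{c_{n,2k+\alpha}\,|y|^av^p(y)}{|x-y|^{n-2k-\alpha}}\,dy,\qquad v(x)=\int_{\mathbb R^n}\frac{c_{n,2l+\beta}\,|y|^bu^q(y)}{|x-y|^{n-2l-\beta}}\,dy,
\]
valid under $0<2k+\alpha,2l+\beta<n$. On these Riesz convolutions I would split the integration domain into $\{|y|<|x|/2\}$, $\{|x|/2\leq|y|\leq 2|x|\}$, $\{|y|>2|x|\}$, estimate each piece via Hardy--Littlewood--Sobolev type bounds and spherical averaging, then run the same coupled bootstrap on exponents to obtain the pointwise estimate \eqref{UB2}. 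The hardest step in either case is the bootstrap itself: correctly setting up the coupled integral inequalities so that $pq>1$ allows closure at the sharp exponents, and justifying \emph{a posteriori} the finiteness of the Riesz integrals $\int|y|^{a-(n-\alpha)}v^p\,dy$ and its analogue (which underpin the $R\to\infty$ limit above) by using the very decay one is proving — this lends the argument an implicit, self-referential character that must be unwound carefully.
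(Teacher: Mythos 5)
Your starting points match the paper's (the Green--Poisson representation on $B_R$ evaluated at $x=0$ for $k=l=0$, and the equivalent integral system via Theorem 1.4 of \cite{CDQ} for $k,l\geq 1$), but the core of your argument has a genuine gap. The coupling step you call crucial --- that ``monotonicity properties inherited from $\alpha/2$-superharmonicity imply that the nonlocal average at scale $r$ dominates $\bar v(2r)$'' --- is exactly what fails when the order is below $2$: Proposition \ref{P1} of this paper exhibits a positive radial super $\frac{\alpha}{2}$-harmonic function whose profile is strictly increasing near the origin, and the monotonicity of spherical averages of Riesz potentials (Proposition \ref{2-2-1}) is proved only for exponent $\gamma\geq 2$, which is unavailable when $k=l=0$. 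There is also a direction problem in your scheme: you aim at an \emph{upper} bound for the nonlocal average of $u$ through the tail integral $\int_{|y|>R}|y|^{a-(n-\alpha)}v^p\,dy$, but making that tail decay requires pointwise decay of $v$ (the very thing being proved), and Jensen's inequality converts $\overline{v^p}$ into $\bar v^p$ only in the direction useful for \emph{lower} bounds. You acknowledge this circularity yourself, but it is not a technicality to be ``unwound'': as stated the argument does not close.

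The paper closes the loop differently and without any monotonicity of $\bar u,\bar v$ in the fractional case: it writes the representation of $u(0)$ at radii $R$ and $NR$, subtracts, and lets $N\to\infty$ to get the \emph{lower} bound \eqref{2-7} for the nonlocal average of $u$ by $\int_{R/2}^{\infty}r^{\alpha+a-1}\bar v^{p}(r)\,dr$; then a H\"older inequality with weight $s^{-t}$ together with the non-increase of the nonlocal average itself (which follows from \eqref{2-4}, not from any decrease of $\bar v$) yields the coupled inequalities \eqref{2-9}--\eqref{2-10}. A single substitution of one into the other, using the finiteness of the averages (from $u(0),v(0)<\infty$) and $pq>1$, gives \eqref{UB} directly --- no iterative bootstrap is needed. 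For $k,l\geq1$ the paper does not use Hardy--Littlewood--Sobolev splittings (an upper-bound tool, again in the wrong direction); it proves Proposition \ref{2-2-1} to get that $\bar u,\bar v$ are decreasing (possible precisely because $2k+\alpha,2l+\beta\geq2$), and then the elementary lower bound $\bar u(R)\geq CR^{2k+\alpha+a}\bar v^{p}(R)$ and its counterpart give the local estimate \eqref{UB2}; this is also why \eqref{UB2} is asserted only for $k,l\geq1$, a point your sketch does not account for.
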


One can see that  \eqref{UB} and \eqref{UB2} generalize the crucial estimates  for second order systems(i.e. Proposition 2.1 in \cite{SZ} and Lemma 3.3 in  \cite{LZ}). Note that  \eqref{UB2} is stronger than \eqref{UB},  therefore for $k,l \geq1$, we only need to prove \eqref{UB2}. It is kind of surprising that we derive a local estimate \eqref{UB2} for the nonlocal higher-order fractional H\'enon-Lane-Emden systems.

The proof of Proposition \ref{2-2-1} has its own interest. As an application, we also construct an interesting example on super $\frac{\alpha}{2}$-harmonic functions.

\begin{prop}\label{P1}
	For $0<\alpha<2$, there exists a positive radially symmetric function $u$, such that $(-\Delta)^{\frac{\alpha}{2}}u\geq 0$ in $\mathbb{R}^{n}$, but $u(x)$ is strictly increasing  with respect to $|x|$ for $|x|<1$.
\end{prop}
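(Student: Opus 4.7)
The plan is to construct $u$ as a fractional Riesz potential of a non-negative radial bump supported away from the origin. Fix a scale $\lambda>0$ (to be chosen at the end) and a smooth radially symmetric $\phi\ge 0$, $\phi\not\equiv 0$, with $\operatorname{supp}\phi\subset\{\lambda\le|y|\le 2\lambda\}$, and set
$$u(x):=\int_{\mathbb{R}^{n}}\phi(y)|x-y|^{\alpha-n}\,dy.$$
Positivity and radial symmetry of $u$ are evident. Since $c_{n,\alpha}|x|^{\alpha-n}$ is the fundamental solution of $(-\Delta)^{\frac{\alpha}{2}}$ for some positive constant $c_{n,\alpha}$ and $\phi\in C_{c}^{\infty}(\mathbb{R}^{n})$, standard Riesz-potential theory yields $u\in C^{\infty}(\mathbb{R}^{n})\cap\mathcal{L}_{\alpha}(\mathbb{R}^{n})$ and $(-\Delta)^{\frac{\alpha}{2}}u(x)=c_{n,\alpha}\phi(x)\ge 0$ pointwise on $\mathbb{R}^{n}$.

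The heart of the argument is showing that $u$ is strictly radially increasing near $0$. Because $\operatorname{supp}\phi$ sits at a positive distance from $0$, differentiating under the integral twice is legitimate and yields
$$\partial_{i}\partial_{j}u(0)=\int\phi(y)\Big[(\alpha-n)(\alpha-n-2)\frac{y_{i}y_{j}}{|y|^{n-\alpha+4}}+(\alpha-n)\frac{\delta_{ij}}{|y|^{n-\alpha+2}}\Big]\,dy.$$
Polar coordinates combined with $\int_{S^{n-1}}\omega_{i}\omega_{j}\,d\sigma=\frac{|S^{n-1}|}{n}\delta_{ij}$ and the algebraic collapse $\frac{(\alpha-n)(\alpha-n-2)}{n}+(\alpha-n)=\frac{(\alpha-n)(\alpha-2)}{n}$ reduce this to
$$\partial_{i}\partial_{j}u(0)=\frac{(\alpha-n)(\alpha-2)}{n}|S^{n-1}|\,\delta_{ij}\int_{0}^{\infty}\phi(r)r^{\alpha-3}\,dr.$$
Since $0<\alpha<\min\{2,n\}$, the prefactor $(\alpha-n)(\alpha-2)/n$ is strictly positive, so $\nabla^{2}u(0)$ is a positive multiple of the identity. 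Together with $\nabla u(0)=0$ by radial symmetry, this gives $u(x)=u(0)+\frac{c}{2}|x|^{2}+o(|x|^{2})$ with $c>0$, hence $u$ is strictly increasing in $|x|$ on some neighborhood $(0,\rho_{0})$.

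To extend the monotonicity to cover all of $(0,1)$, the scaling identity $u_{\lambda}(x)=\lambda^{\alpha}u_{1}(x/\lambda)$ (obtained from the change of variables $y=\lambda z$) shows that the radius of strict radial increase scales linearly in $\lambda$: $u_{\lambda}$ is strictly radially monotone on $(0,\lambda\rho_{0})$. Choosing any $\lambda>1/\rho_{0}$ completes the construction. What makes the example genuinely interesting is that the prefactor $(\alpha-n)(\alpha-2)/n$ vanishes precisely at $\alpha=2$, reflecting Newton's shell theorem for the classical Laplacian, while for any $\alpha<2$ it is strictly positive, so the shell theorem fails in the fractional regime and super $\frac{\alpha}{2}$-harmonicity is compatible with strict radial growth in the interior. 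I expect no real obstacle beyond the Hessian bookkeeping and the sign analysis, both of which are concentrated in the single algebraic identity above.
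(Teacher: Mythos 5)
Your construction is exactly the paper's (the Riesz potential of a smooth non-negative radial bump supported in an annulus away from the origin, so that $(-\Delta)^{\frac{\alpha}{2}}u$ is a positive multiple of the bump), but your mechanism for the strict radial increase is genuinely different. The paper differentiates the radial profile at every $r\in(0,1)$ and reduces the sign of $\frac{du}{dr}$ to a geometric lemma (its claim \eqref{22-5}): for $R>r$ the spherical integral $\int_{\partial B_R(0)}\frac{x\cdot(x-y)}{|x-y|^{n-\gamma+2}}\,d\sigma_y$ is strictly negative when $\gamma<2$, proved by a law-of-sines computation in spherical coordinates; combined with the factor $\alpha-n<0$ this gives $\frac{du}{dr}>0$ on the whole hole of the annulus at once, no rescaling needed. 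You instead compute the Hessian at the origin, obtain $\nabla^2u(0)=\frac{(\alpha-n)(\alpha-2)}{n}\,|S^{n-1}|\Bigl(\int_0^\infty\phi(r)r^{\alpha-3}dr\Bigr)I>0$, and then scale the annulus to push the region of monotonicity past radius $1$; your algebraic factor $(\alpha-n)(\alpha-2)$ is the infinitesimal-at-the-origin shadow of the paper's $\gamma=2$ dichotomy and isolates the failure of the shell theorem for $\alpha<2$ just as cleanly. Your route is more elementary (no spherical-geometry lemma), while the paper's lemma is stronger and is reused elsewhere (it drives Proposition 2.2 and hence the decay estimates). Two small points to tighten: the pointwise expansion $u(x)=u(0)+\frac{c}{2}|x|^2+o(|x|^2)$ by itself does not give monotonicity on a punctured neighborhood — you should argue via the smooth radial profile $g(r)=u(re_1)$, with $g'(0)=0$, $g''(0)=c>0$ and continuity of $g''$, which you have since $u\in C^\infty$ near $0$; and the scaling identity $u_\lambda(x)=\lambda^{\alpha}u_1(x/\lambda)$ needs the bumps tied together by $\phi_\lambda(y)=\phi_1(y/\lambda)$, which you should state explicitly. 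Neither is a real gap.
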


It is well-known that the spherical average of  a super harmonic function is monotone decreasing.  One may ask  whether $\overline{u}$  decrease for super $\frac{\alpha}{2}$-harmonic function $u$?  In general, Proposition \ref{P1} gives a negative answer. This reveals another essential difference between the nonlocal fractional Laplacians $(-\Delta)^{\frac{\alpha}{2}}$ and the regular Laplacian $-\Delta$.

As an immediate application of Theorem \ref{Upper},  we obtain the following Liouville type theorem.
\begin{thm}\label{thm2}
Let $(u,v)$ be a solution to \eqref{HPDE}. Suppose that $ p,q\geq 1$. Then $(u,v)=(0,0)$ if one of the following three conditions holds,\\
	\emph{(i)} $p=q=1$,\\
	\emph{(ii)} $pq>1$ and $\frac{2k+\alpha+a+p(2l+\beta+b)}{pq-1}>n-2k-\alpha$,\\
	\emph{(iii)} $pq>1$ and $\frac{2l+\beta+b+q(2k+\alpha+a)}{pq-1}>n-2l-\beta$.\\	
\end{thm}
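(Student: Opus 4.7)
My plan is to derive Theorem \ref{thm2} from the decay estimates of Theorem \ref{Upper} combined with the integral representation of solutions. Conditions (ii) and (iii) are symmetric under the exchange $(u,v,k,\alpha,p,a)\leftrightarrow(v,u,l,\beta,q,b)$, so I only need to treat (ii); case (i), in which Theorem \ref{Upper} is unavailable because $pq=1$, requires a separate linear bootstrap discussed at the end.

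For (ii), I would first invoke the super-poly-harmonic property of $u$ (provided in the higher-order case $k\geq 1$ by Theorem 1.4 of \cite{CDQ}, and obtained directly from the Riesz representation when $k=0$) to obtain, for $2k+\alpha<n$,
\[
u(x)\ \geq\ C\int_{\mathbb{R}^n}\frac{|y|^a v(y)^p}{|x-y|^{n-2k-\alpha}}\,dy.
\]
Restricting the integration to $|y|\leq |x|/2$ gives $|x-y|\leq \tfrac{3}{2}|x|$ and hence the radial lower bound $u(x)\geq C|x|^{-(n-2k-\alpha)}\int_{|y|\leq|x|/2}|y|^a v(y)^p\,dy$. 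I would next match this against the upper estimates of Theorem \ref{Upper}. In the higher-order regime $k,l\geq 1$ one simply takes the spherical average at radius $R$ and compares with \eqref{UB2}. In the fractional regime $k=l=0$ one instead integrates the radial lower bound against the weight $\frac{R^\alpha}{r(r^2-R^2)^{\alpha/2}}$; the substitution $r=Rs$ shows $\int_R^\infty \frac{R^\alpha}{r(r^2-R^2)^{\alpha/2}}\,r^{\alpha-n}\,dr = CR^{\alpha-n}$ (the resulting $s$-integral converges because $\alpha<2$ at $s=1$ and $n\geq 1$ at infinity), and comparison with \eqref{UB} produces the same conclusion. In either case we arrive at
\[
\int_{|y|\leq R/2}|y|^a v(y)^p\,dy\ \leq\ C\,R^{n-2k-\alpha-\gamma_1},\qquad \gamma_1:=\tfrac{2k+\alpha+a+p(2l+\beta+b)}{pq-1}.
\]
Hypothesis (ii) makes the exponent negative, so $R\to\infty$ forces $\int_{\mathbb{R}^n}|y|^a v(y)^p\,dy=0$, i.e.\ $v\equiv 0$. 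Then $(-\Delta)^{k+\alpha/2}u\equiv 0$, and a standard Liouville theorem for non-negative super-$(k+\alpha/2)$-harmonic functions yields $u\equiv 0$; case (iii) is identical after swapping $u\leftrightarrow v$.

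For case (i), $p=q=1$, the system is linear, which invites a bootstrap on the decay rate. Iterating the integral representation gives
\[
u(x)\ \geq\ C\int\!\!\int\frac{|y|^a|z|^b u(z)}{|x-y|^{n-2k-\alpha}|y-z|^{n-2l-\beta}}\,dz\,dy,
\]
and each application of a Riesz composition estimate (absorbing the weights $|y|^a,|z|^b$) improves the decay rate of $\overline{u}$ by a fixed positive amount; after finitely many iterations the decay is faster than any polynomial, and the argument of (ii) again forces $v\equiv 0$ and then $u\equiv 0$. I expect the main technical obstacle throughout to be the $k=l=0$ subcase, where Theorem \ref{Upper} only controls the weighted nonlocal average: the kernel computation above is precisely the bridge that converts this averaged information into the pointwise conclusion $\int|y|^a v^p\,dy=0$, and the same kernel identity is what underlies the bootstrap in case (i).
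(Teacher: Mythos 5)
Your treatment of conditions \emph{(ii)} and \emph{(iii)} is correct and essentially coincides with the paper's argument. The paper establishes the lower bound $u(x)\geq C|x|^{-(n-2k-\alpha)}$, $v(x)\geq C|x|^{-(n-2l-\beta)}$ for $|x|\geq 1$ (their \eqref{LB0}), using exactly the restriction of the Riesz representation to a small ball as you do, and then contradicts Theorem \ref{Upper}; your version absorbs $\int_{|y|\leq R/2}|y|^a v^p$ into the comparison rather than fixing it as a constant, but that is the same content. Your kernel computation $\int_R^\infty \frac{R^\alpha}{r(r^2-R^2)^{\alpha/2}}r^{\alpha-n}\,dr = CR^{\alpha-n}$ converting the pointwise lower bound into a lower bound on the nonlocal average when $k=l=0$ is also correct. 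One small blemish: a Liouville theorem for nonnegative super-$(k+\alpha/2)$-harmonic functions would only give $u\equiv\text{const}$, not $u\equiv 0$; but the integral representation you already have gives $u\equiv 0$ directly once $v\equiv 0$, so this is cosmetic.

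Case \emph{(i)}, however, has a genuine gap. First, your bootstrap is pointed the wrong way: iterating $u\geq CI_{2k+\alpha}\big(|\cdot|^a\, I_{2l+\beta}(|\cdot|^b u)\big)$ against the starting lower bound $u\gtrsim |x|^{-(n-2k-\alpha)}$ produces lower bounds $u(x)\geq c_j|x|^{-\mu_j}$ with $\mu_j$ \emph{decreasing}; the lower bound gets larger (eventually polynomial growth), not ``decay faster than any polynomial.'' Second, the closing step ``the argument of (ii) again forces $v\equiv 0$'' cannot be run: (ii) was driven by Theorem \ref{Upper}, which is explicitly unavailable when $pq=1$ — as you yourself note at the outset. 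If you want to salvage the bootstrap, the contradiction must instead come from divergence of the Riesz integral (once $\mu_j$ drops below the threshold that makes $\int_{|y|>1}|y|^{a-p\mu_j-(n-2k-\alpha)}\,dy$ diverge, the representation for $u$, or $u\in\mathcal{L}_\alpha$, fails). The paper's own proof of (i) is far shorter and avoids all of this: the intermediate inequalities \eqref{2-9}, \eqref{2-10} (and \eqref{22-14}, \eqref{22-15} for $k,l\geq 1$) hold for all $p,q\geq 1$ including $p=q=1$, and chaining them gives, with $F_u(R):=\int_R^\infty\frac{R^\alpha}{r(r^2-R^2)^{\alpha/2}}\overline u(r)\,dr$,
\begin{equation*}
F_u(R)\;\geq\; CR^{\alpha+a}F_v(R)\;\geq\; CR^{\alpha+\beta+a+b}F_u(R),
\end{equation*}
which is impossible for large $R$ unless $F_u\equiv 0$, i.e.\ $u\equiv 0$, and then $v\equiv 0$.
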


A. Biswas \cite{BA} proved a similar Liouville type theorem for super-solutions of fractional systems with general nonlinearities through a probabilistic approach. We give a direct proof of PDE  method for both fractional and  higher-order fractional systems. Under suitable growth assumptions on $f$ and $g$, our methods can also be applied to the following systems with general nonlinearities.
\begin{equation}\label{GHPDE}\\\begin{cases}
		(-\Delta)^{k+\frac{\alpha}{2}}u(x)=f(x,u,v), \,\,\,\,\,\,\,\, x\in\mathbb{R}^{n}, \\
		(-\Delta)^{l+\frac{\beta}{2}}v(x)=g(x,u,v), \,\,\,\,\, x\in\mathbb{R}^{n},\\
		u(x)\geq 0, v(x)\geq 0,\,\,\, x\in \mathbb{R}^{n}.
\end{cases}\end{equation}
We leave this problem to interested readers.

Next, we investigate the fractional and higher-order fractional  H\'enon-Lane-Emden systems \eqref{HPDE} using other methods. We obtain the following Liouville type theorem for \eqref{HPDE} via the method of scaling spheres in integral form when $2k+\alpha, 2l+\beta <n$ and using Theorem 1.13 in \cite{CDQ} if $\max\{2k+\alpha, 2l+\beta\}\geq n$.
\begin{thm}\label{thm3}
	If $0<2k+\alpha, 2l+\beta <n$, $0< p\leq\frac{n+2k+\alpha+2a}{n-2l-\beta}$, $0< q\leq\frac{n+2l+\beta+2b}{n-2k-\alpha}$ and $(p,q)\not=(\frac{n+2k+\alpha+2a}{n-2l-\beta},\frac{n+2l+\beta+2b}{n-2k-\alpha})$, or if $\max\{2k+\alpha, 2l+\beta\}\geq n$ and $n\geq 2$, then  system \eqref{HPDE} admits no non-trivial solutions.
\end{thm}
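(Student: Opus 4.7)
The plan is to split the statement into its two stated ranges. For the high-order case $\max\{2k+\alpha,2l+\beta\}\ge n$ with $n\ge 2$, I would apply Theorem 1.13 of \cite{CDQ} to each equation separately: under that size restriction a nonnegative classical super-$\frac{\gamma}{2}$-harmonic function of the appropriate order must vanish, and feeding $u\equiv 0$ or $v\equiv 0$ back through the other equation yields $(u,v)\equiv(0,0)$. For the subcritical range $0<2k+\alpha,\,2l+\beta<n$, I would first invoke Theorem 1.4 of \cite{CDQ} to pass from \eqref{HPDE} to the equivalent integral system
\[
u(x)=C_{1}\!\int_{\mathbb{R}^n}\!\frac{|y|^a v(y)^p}{|x-y|^{n-2k-\alpha}}\,dy,\qquad v(x)=C_{2}\!\int_{\mathbb{R}^n}\!\frac{|y|^b u(y)^q}{|x-y|^{n-2l-\beta}}\,dy,
\]
and then run the method of scaling spheres from \cite{DQ2} on this integral system.

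For each $\lambda>0$, introduce the Kelvin-type transforms
\[
u_\lambda(x)=\Bigl(\tfrac{\lambda}{|x|}\Bigr)^{n-2k-\alpha}u\!\Bigl(\tfrac{\lambda^2 x}{|x|^2}\Bigr),\qquad v_\lambda(x)=\Bigl(\tfrac{\lambda}{|x|}\Bigr)^{n-2l-\beta}v\!\Bigl(\tfrac{\lambda^2 x}{|x|^2}\Bigr),
\]
together with the defects $U_\lambda=u_\lambda-u$, $V_\lambda=v_\lambda-v$. The integral identities for $(u,v)$ transfer to integral identities for $(U_\lambda,V_\lambda)$ on $B_\lambda(0)\setminus\{0\}$ whose right-hand sides are controlled by the negative parts $U_\lambda^{-}$ and $V_\lambda^{-}$. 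A Hardy-Littlewood-Sobolev estimate on the set where $U_\lambda$ or $V_\lambda$ is negative, paired with the matched Kelvin norms $L^{n/(n-2k-\alpha)}$ and $L^{n/(n-2l-\beta)}$, produces a strict contraction for small $\lambda$, which starts the sphere $U_\lambda,V_\lambda\ge 0$ on $B_\lambda\setminus\{0\}$. If the sphere were to stop at some finite critical radius $\lambda_0$, the same narrow-region contraction applied just outside $B_{\lambda_0}$ would extend it, so the sphere must persist for every $\lambda>0$.

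Unpacking $u_\lambda\ge u$ and $v_\lambda\ge v$ for all $\lambda>0$ produces asymptotic lower bounds $u(x)\gtrsim|x|^{-\mu_0}$ and $v(x)\gtrsim|x|^{-\nu_0}$ at infinity. Bootstrapping these bounds through the integral equations yields sharper lower bounds $u(x)\gtrsim|x|^{-\mu_j}$, $v(x)\gtrsim|x|^{-\nu_j}$ with strictly decreasing exponents at each step, as long as $(p,q)$ is strictly subcritical; after finitely many iterations the integrand on the right-hand side of the equation for $u(0)$ (or for $v(0)$) fails to be integrable at infinity, contradicting the finiteness of $u$ (respectively $v$). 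The main obstacle is the coupling: unlike in the scalar case of \cite{DQ2}, the contraction has to be designed on the product space $L^{n/(n-2k-\alpha)}\times L^{n/(n-2l-\beta)}$ and the bootstrap has to be propagated through two distinct Riesz exponents with the asymmetric weights $|y|^a$ and $|y|^b$. Verifying that the subcritical conditions on $(p,q)$, together with the exclusion of the critical endpoint $(\tfrac{n+2k+\alpha+2a}{n-2l-\beta},\tfrac{n+2l+\beta+2b}{n-2k-\alpha})$, are precisely what drives both the narrow-region contraction and the exponent improvement at each bootstrap step is the technical heart of the argument.
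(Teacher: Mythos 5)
Your proposal follows essentially the same route as the paper's proof: the case $\max\{2k+\alpha,2l+\beta\}\geq n$ is dispatched by Theorem 1.13 of \cite{CDQ}, and the subcritical-order case by the integral-system equivalence (Lemma \ref{HEQ}) together with the method of scaling spheres in integral form — Kelvin transforms, a Hardy--Littlewood--Sobolev contraction to start near $\lambda=0$, a narrow-region argument to force $\lambda_{0}=+\infty$, the resulting lower bounds $u\geq C|x|^{-\frac{n-2k-\alpha}{2}}$, $v\geq C|x|^{-\frac{n-2l-\beta}{2}}$, and the iterative improvement of the decay exponents. The only divergence is the last step, where you derive the contradiction from non-integrability of the Riesz integrals at infinity (the standard ending of \cite{DQ2}), while the paper instead contradicts its decay estimate \eqref{UB}; both endings are viable and the rest of your outline coincides with the paper's argument.
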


The method of scaling spheres developed recently in 2018 in \cite{DQ2} can be successfully applied to various PDE or IE problems  where the method of moving spheres and  the method of moving planes in conjunction with Kelvin transforms do not work(see \cite{DQ2,DQ3,DQ4,DQZ}). Especially, in \cite{DQ2} the authors obtained the sharp Liouville theorems for fractional and higher-order sub-critical Hardy-H\'{e}non-Lane-Emden equations via the method of scaling spheres. Peng \cite{DP} also proved Theorem \ref{thm3} for $k=l=0$, $0<\alpha=\beta<2$ and $k=l\geq 1$, $\alpha=\beta=0$ through similar methods.

For single higher-order fractional equations, Cao, Dai and Qin \cite{CDQ} obtained the Liouville type theorem in $\mathbb{R}^n$ by  establishing super-harmonic properties and the method of scaling spheres in integral form. Zhuo and Li \cite{ZL} proved the Liouville type theorem in $\mathbb{R}^n_+$ with some additional assumptions. Falzy and Wei classified all stable solutions in \cite{FW1} and all finite Morse index solutions in \cite{FW2} via monotonicity formulae. For more literatures on the classification of solutions and Liouville type theorems for various PDE and IE problems please refer to \cite{CD, CDQ2, CL, CL2, DFH, DL, DL2, DQ2, DQ3, DQ4, DQZ} and the references therein.

In the following, we will use $C$ to denote a general positive constant that may depend on $n$, $k$, $l$, $\alpha$, $\beta$, $a$, $b$, $p$, $q$,  $u$ and $v$, whose value may differ from line to line. The spherical average is always taken with respect to the center $0$ for convenience.

\section{Decay estimates and applications}
This section is devoted to prove the decay estimates for solutions of fractional and  higher-order fractional H\'enon-Lane-Emden systems(i.e. Theorem \ref{Upper}). As immediate applications, we will obtain Proposition \ref{P1} and Theorem \ref{thm2}.

\subsection{Proof of Theorem \ref{Upper}}
Applying Theorem 1.4 in \cite{CDQ}, we obtain  the equivalence between PDE systems \eqref{HPDE} and corresponding IE systems.
\begin{lem}\label{HEQ}
	Let $(u,v)$ be a non-negative solution to systems \eqref{HPDE}. Suppose that $0<2k+\alpha, 2l+\beta <n$ and $p,q\geq  0$, then $(u,v)$ also solves the following integral systems
	\begin{equation}\label{HIE}\\\begin{cases}
	u(x)=\int_{\mathbb{R}^n} \frac{R_{2k+\alpha,n}}{|x-y|^{n-2k-\alpha}}|y|^av^p(y)dy, \,\,\,\,\,\,\,\, x\in\mathbb{R}^{n}, \\
	v(x)=\int_{\mathbb{R}^n} \frac{R_{2l+\beta,n}}{|x-y|^{n-2l-\beta}}|y|^bu^q(y)dy, \,\,\,\,\,\,\,\,\, x\in\mathbb{R}^{n},
	\end{cases}\end{equation}
	where the Riesz potential's constants $R_{\gamma,n}:=\frac{\Gamma(\frac{n-\gamma}{2})}{\pi^{\frac{n}{2}}2^{\gamma}\Gamma(\frac{\gamma}{2})}$ for $0<\gamma<n$ (see \cite{Stein}).
\end{lem}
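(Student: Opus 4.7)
The plan is to invoke Theorem 1.4 of \cite{CDQ} separately on each of the two equations in the system \eqref{HPDE}, viewing each as a single higher-order fractional equation of the form $(-\Delta)^{m+\frac{\gamma}{2}}w=h$ with a non-negative right-hand side. That theorem from \cite{CDQ} provides, under the conditions $0<2m+\gamma<n$, $w\geq 0$ in the correct regularity/decay class, and $h\geq 0$, the equivalence between such a single higher-order fractional PDE and its Riesz-potential representation with constant $R_{2m+\gamma,n}$.

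Concretely, first I would apply the cited theorem with $w=u$, $m=k$, $\gamma=\alpha$, and $h(x)=|x|^a v(x)^p$. Non-negativity of $h$ is immediate from $u,v\geq 0$, $a\geq 0$ and $p\geq 0$; the continuity and $\mathcal{L}_{\alpha}$-integrability of $h$ follow from the assumed local regularity of $u,v$ together with $a\geq 0$; and $0<2k+\alpha<n$ is in the hypothesis of the lemma. These are exactly the ingredients required by Theorem 1.4 of \cite{CDQ}, so its conclusion yields the first line of \eqref{HIE}. Applying the same theorem symmetrically with $w=v$, $m=l$, $\gamma=\beta$, $h(x)=|x|^b u(x)^q$ gives the second line.

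The substantive content of this equivalence, and the part that is genuinely hard, lies inside Theorem 1.4 of \cite{CDQ}: one must discard the harmonic/polynomial remainders in the decomposition of $u$ and $v$, which requires establishing super-poly-harmonic properties, namely that $(-\Delta)^j(-\Delta)^{\alpha/2}u\geq 0$ for all $0\leq j\leq k-1$ and analogously for $v$. This is precisely the step where the method of nonlocal averages in \cite{CDQ} is essential, and where the non-negativity of $u,v$ together with the non-negativity of the right-hand sides $|x|^a v^p$ and $|x|^b u^q$ is used in full. Since that work is already done in \cite{CDQ}, here the main obstacle reduces to verifying that the hypotheses of \cite{CDQ}'s Theorem 1.4 are met for each equation, which is straightforward under our assumptions.

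In summary, the proof amounts to: (i) checking that for each of the two equations the right-hand side and the unknown sit in the class to which Theorem 1.4 of \cite{CDQ} applies, and (ii) quoting that theorem to convert each PDE into its Riesz-potential form, yielding \eqref{HIE}.
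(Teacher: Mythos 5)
Your proposal matches the paper's own argument: the paper proves Lemma \ref{HEQ} in one line by invoking Theorem 1.4 of \cite{CDQ}, exactly as you do, treating each equation of \eqref{HPDE} as a single higher-order fractional equation with a fixed non-negative right-hand side and deferring the substantive work (the super poly-harmonic property and removal of the remainder terms) to that cited theorem. Your additional verification of the hypotheses ($0<2k+\alpha,2l+\beta<n$, non-negativity and regularity of $|x|^a v^p$ and $|x|^b u^q$) is consistent with, and slightly more explicit than, what the paper records.
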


In the following, we will prove the decay estimates \eqref{UB} and \eqref{UB2} separately.

\subsubsection{Fractional cases}
We first prove the key decay estimate \eqref{UB} for $(u, v)$ in the fractional cases $k=l=0$.
From the first equation in \eqref{HPDE},  for arbitrary $R>0$, we conclude that
\begin{equation}\label{2-1}
u(x)=\int_{B_{R}(0)}G^\alpha_R(x,y)|y|^a v^{p}(y)dy+\int_{|y|>R}P^{\alpha}_{R}(x,y)u(y)dy,
\end{equation}
where the Green function for $(-\Delta)^{\frac{\alpha}{2}}$ with $0<\alpha<2$ on $B_R(0)$ is given by
\begin{equation*}
G^\alpha_R(x,y):=\frac{C_{n,\alpha}}{|x-y|^{n-\alpha}}\int_{0}^{\frac{t_{R}}{s_{R}}}\frac{b^{\frac{\alpha}{2}-1}}{(1+b)^{\frac{n}{2}}}db
\,\,\,\,\,\,\,\,\, \text{if} \,\, x,y\in B_{R}(0),
\end{equation*}
with $s_{R}=\frac{|x-y|^{2}}{R^{2}}$, $t_{R}=\left(1-\frac{|x|^{2}}{R^{2}}\right)\left(1-\frac{|y|^{2}}{R^{2}}\right)$, and $G^{\alpha}_{R}(x,y)=0$ if $x$ or $y\in\mathbb{R}^{n}\setminus B_{R}(0)$ (see \cite{K}) and the Poisson kernel $P^{\alpha}_{R}(x,y)$ for $(-\Delta)^{\frac{\alpha}{2}}$ in $B_{R}(0)$ is defined by $P^{\alpha}_{R}(x,y):=0$ for $|y|<R$ and
\begin{equation*}
P^{\alpha}_{R}(x,y):=\frac{\Gamma(\frac{n}{2})}{\pi^{\frac{n}{2}+1}}\sin\frac{\pi\alpha}{2}\left(\frac{R^{2}-|x|^{2}}{|y|^{2}-R^{2}}\right)^{\frac{\alpha}{2}}\frac{1}{|x-y|^{n}},
\end{equation*}
for $|y|>R$ (see \cite{CLM}).

Taking $x=0$ in \eqref{2-1}, we have
\begin{align}\label{2-4}
u(0)&= \int_{B_{R}(0)}\frac{C_{n,\alpha}}{|y|^{n-\alpha}}\left(\int_{0}^{\frac{R^{2}}{|y|^{2}}-1}\frac{b^{\frac{\alpha}{2}-1}}{(1+b)^{\frac{n}{2}}}db\right)|y|^a v^{p}(y)dy\nonumber\\
&\quad+\int_{|y|>R}\frac{C'_{n,\alpha}R^\alpha}{(|y|^2-R^2)^{\frac{\alpha}{2}}|y|^n}u(y)dy\\\nonumber
&= C_0\int_{0}^{R}r^{\alpha+a-1}\left(\int_{0}^{\frac{R^{2}}{r^{2}}-1}\frac{b^{\frac{\alpha}{2}-1}}{(1+b)^{\frac{n}{2}}}db\right)\overline{v^{p}}(r)dr \\
&\quad +C'_0\int_{R}^{+\infty}\frac{R^{\alpha}}{r(r^{2}-R^{2})^{\frac{\alpha}{2}}}\overline{u}(r)dr.\nonumber
\end{align}
Replacing $R$ by $NR$ in \eqref{2-4}, we get
\begin{align}\label{2-5}
u(0)&=C_0\int_{0}^{NR}r^{\alpha+a-1}\left(\int_{0}^{\frac{N^2R^{2}}{r^{2}}-1}\frac{b^{\frac{\alpha}{2}-1}}{(1+b)^{\frac{n}{2}}}db\right)\overline{v^{p}}(r)dr\\
&\qquad+C'_0\int_{NR}^{+\infty}\frac{N^{\alpha}R^{\alpha}}{r(r^{2}-N^2R^{2})^{\frac{\alpha}{2}}}\overline{u}(r)dr.\nonumber
\end{align}
Now, from \eqref{2-4} and \eqref{2-5}, we deduce
\begin{align}\label{2-6}
&\quad\int_{R}^{+\infty}\frac{R^{\alpha}}{r(r^{2}-R^{2})^{\frac{\alpha}{2}}}\overline{u}(r)dr\nonumber\\
&\geq C_1\int_{0}^{R}r^{\alpha+a-1}\left(\int_{\frac{R^{2}}{r^{2}}-1}^{\frac{N^2R^{2}}{r^{2}}-1}\frac{b^{\frac{\alpha}{2}-1}}{(1+b)^{\frac{n}{2}}}db\right)\overline{v}^{p}(r)dr\nonumber
\\&\quad+C_1\int_{R}^{NR}r^{\alpha+a-1}\left(\int_{0}^{\frac{N^2R^{2}}{r^{2}}-1}\frac{b^{\frac{\alpha}{2}-1}}{(1+b)^{\frac{n}{2}}}db\right)\overline{v}^{p}(r)dr \\
&\geq C_1\int_{\frac{R}{2}}^{R}r^{\alpha+a-1}\left(\int_{3}^{\frac{N^2R^{2}}{r^{2}}-1}\frac{b^{\frac{\alpha}{2}-1}}{(1+b)^{\frac{n}{2}}}db\right)\overline{v}^{p}(r)dr\nonumber \\
&\quad+C_1\int_{R}^{NR}r^{\alpha+a-1}\left(\int_{0}^{\frac{N^2R^{2}}{r^{2}}-1}\frac{b^{\frac{\alpha}{2}-1}}{(1+b)^{\frac{n}{2}}}db\right)\overline{v}^{p}(r)dr\nonumber.
\end{align}
Letting $N\rightarrow +\infty$ and by the Lebesgue's dominated convergence theorem, we arrive at
\begin{equation}\label{2-7}
\int_{R}^{+\infty}\frac{R^{\alpha}}{r(r^{2}-R^{2})^{\frac{\alpha}{2}}}\overline{u}(r)dr\geq C \int_{\frac{R}{2}}^{+\infty}r^{\alpha+a-1}\overline{v}^{p}(r)dr.
\end{equation}

Then by H\"{o}lder's inequality, for fixed $t\geq3$, we have
\begin{align*}
&\quad\int_{\frac{R}{2}}^{+\infty}s^{-t}\int_s^{+\infty}\frac{s^{\beta}}{r(r^{2}-s^{2})^{\frac{\beta}{2}}}\overline{v}(r)drds
=\int_{\frac{R}{2}}^{+\infty}\frac{\overline{v}(r)}{r}\int_{\frac{R}{2}}^{r}\frac{s^{\beta-t}}{(r^{2}-s^{2})^{\frac{\beta}{2}}}dsdr\nonumber\\
&\leq C\left(\int_{\frac{R}{2}}^{+\infty}\frac{\overline{v}(r)}{r^{t}}dr+R^{1+\beta-t}\int_{\frac{R}{2}}^{+\infty} \frac{\overline{v}(r)}{r^{1+\beta}}dr\right)\nonumber\\
&\leq C R^{1-t-\frac{\alpha+a}{p}} \left(\int_{\frac{R}{2}}^{+\infty}r^{\alpha+a-1}{\overline{v}}^{p}(r)dr\right)^{\frac{1}{p}}\nonumber.
\end{align*}
From \eqref{2-4}, it is not difficult to verify that $\int_{R}^{+\infty}\frac{R^{\alpha}}{r(r^{2}-R^{2})^{\frac{\alpha}{2}}}\overline{u}(r)dr$ is non-increasing with respect to $R$, and so is $\int_{R}^{+\infty}\frac{R^{\beta}}{r(r^{2}-R^{2})^{\frac{\beta}{2}}}\overline{v}(r)dr$.
Hence, we obtain
\begin{align}\label{2-8}
&\quad\int_{\frac{R}{2}}^{+\infty}r^{\alpha+a-1}\overline{v}^{p}(r)dr\nonumber\\
&\geq CR^{tp-p+\alpha+a} \left(\int_{\frac{R}{2}}^{+\infty}s^{-t}\int_s^{+\infty}\frac{s^{\beta}}{r(r^{2}-s^{2})^{\frac{\beta}{2}}}\overline{v}(r)drds\right)^p\\
&\geq CR^{tp-p+\alpha+a} \left(\int_{\frac{R}{2}}^{R}R^{-t}\int_R^{+\infty}\frac{R^{\beta}}{r(r^{2}-R^{2})^{\frac{\beta}{2}}}\overline{v}(r)drds\right)^p\nonumber\\
&\geq CR^{\alpha+a}\left(\int_R^{+\infty}\frac{R^{\beta}}{r(r^{2}-R^{2})^{\frac{\beta}{2}}}\overline{v}(r)dr\right)^p\nonumber.
\end{align}
We infer from \eqref{2-7} and \eqref{2-8} that
\begin{equation}\label{2-9}
\int_{R}^{+\infty}\frac{R^{\alpha}}{r(r^{2}-R^{2})^{\frac{\alpha}{2}}}\overline{u}(r)dr\geq CR^{\alpha+a}\left(\int_R^{+\infty}\frac{R^{\beta}}{r(r^{2}-R^{2})^{\frac{\beta}{2}}}\overline{v}(r)dr\right)^p.
\end{equation}

Similarly, from the second equation in \eqref{HPDE}, we can deduce
\begin{equation}\label{2-10}
\int_{R}^{+\infty}\frac{R^{\beta}}{r(r^{2}-R^{2})^{\frac{\beta}{2}}}\overline{v}(r)dr\geq CR^{\beta+b}\left(\int_R^{+\infty}\frac{R^{\alpha}}{r(r^{2}-R^{2})^{\frac{\alpha}{2}}}\overline{u}(r)dr\right)^q.
\end{equation}

If $p,q>1$, then \eqref{UB} follows immediately from\eqref{2-9} and \eqref{2-10}.

\subsubsection{Higher-order fractional cases}

We first establish the following basic property of integral equations.
\begin{prop}\label{2-2-1}
	Assume that $u\in C^1(\mathbb{R}^n)$ satisfies the following integral equation
	\begin{equation}\label{22-1}
	u(x)=\int_{\mathbb{R}^n} \frac{f(y)}{|x-y|^{n-\gamma}}dy,
	\end{equation}
	with $2\leq\gamma<n$ and $f\geq 0$.
	
	Then $\overline{u}(r)=\int_{\mathbb{R}^n} \frac{\overline{f}(|y|)}{|x-y|^{n-\gamma}}dy$ and $\overline{u}(r)$ decreases with respect to $r>0$.
\end{prop}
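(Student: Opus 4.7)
The plan is to decompose the task into three natural steps: first establish the identity $\overline{u}(r)=\int_{\mathbb{R}^n}\frac{\overline{f}(|y|)}{|x-y|^{n-\gamma}}dy$ (for any $x$ with $|x|=r$; the right-hand side is automatically radial in $x$ by the radial symmetry of $\overline{f}$), then deduce monotonicity by realising the right-hand side as a radial superharmonic function. To establish the identity, fix $x_0$ with $|x_0|=r$, spherically average \eqref{22-1} over $|x|=r$, and exchange the order of integration (legal since $f\ge 0$) to obtain
\[\overline{u}(r)=\int_{\mathbb{R}^n} f(y)\,K(r,|y|)\,dy,\qquad K(r,s):=\frac{1}{|S^{n-1}|}\int_{S^{n-1}}\frac{d\omega}{|r\omega-se_1|^{n-\gamma}},\]
where rotational symmetry forces the kernel to depend only on $(r,|y|)$. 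Writing the $y$-integral in polar coordinates folds $f$ into $\overline{f}$, and reversing the Fubini step (the $x$-averaging is now trivial because $\overline{f}(|y|)$ is already radial) recasts the right-hand side as $\int_{\mathbb{R}^n}\overline{f}(|y|)|x_0-y|^{\gamma-n}dy$, giving the first claim.

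For the monotonicity, I would reduce to showing that the radial function $\tilde u(x):=\int_{\mathbb{R}^n}\overline{f}(|y|)|x-y|^{\gamma-n}dy$ is superharmonic, i.e.\ $-\Delta\tilde u\ge 0$. The kernel $\Phi(z)=|z|^{\gamma-n}$ obeys $\Delta\Phi(z)=(\gamma-n)(\gamma-2)|z|^{\gamma-n-2}$ on $z\ne 0$, which is $\le 0$ precisely when $2\le\gamma<n$. Distributionally, $-\Delta\Phi$ equals a positive multiple of $\delta_0$ when $\gamma=2$, and a positive multiple of the Riesz kernel $|z|^{\gamma-2-n}$ when $\gamma>2$ (equivalently, the Riesz composition identity $I_\gamma=C\,I_2\ast I_{\gamma-2}$, valid for $\gamma<n$). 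Convolving the non-negative distribution $-\Delta\Phi$ with the non-negative function $\overline{f}(|\cdot|)$ preserves the sign, so $-\Delta\tilde u\ge 0$.

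Finally, since $\tilde u$ is radial, writing $\tilde u=\tilde u(r)$ the superharmonic inequality reads $-\tilde u''(r)-\frac{n-1}{r}\tilde u'(r)\ge 0$, i.e.\ $(r^{n-1}\tilde u'(r))'\le 0$; integrating from $0$ (where $r^{n-1}\tilde u'(r)\to 0$ by radial regularity at the origin) yields $\tilde u'(r)\le 0$, which is exactly the monotonicity of $\overline{u}(r)=\tilde u(r)$. The main obstacle I anticipate is regularity bookkeeping: the hypothesis is only $u\in C^1$, so the pointwise ODE in the last step is a priori only distributional. I would bypass this either by invoking the classical fact that distributionally superharmonic functions have non-increasing spherical means, or by mollifying $\overline{f}$ to produce smooth approximants $\tilde u_\varepsilon$, running the ODE argument rigorously for them, and passing to the limit using monotone convergence.
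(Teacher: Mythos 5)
Your proposal is correct, and the first step (establishing $\overline{u}(r)=\int_{\mathbb{R}^n}\overline{f}(|y|)|x-y|^{\gamma-n}dy$ by spherical averaging and Fubini) matches the paper exactly. Where you diverge is the monotonicity step. The paper differentiates the averaged formula to get $\frac{d\overline{u}}{dr}(r)=\frac{\gamma-n}{r}\int_{\mathbb{R}^n}\frac{x\cdot(x-y)}{|x-y|^{n-\gamma+2}}\overline{f}(|y|)\,dy$, splits the $y$-integration into the regions $|y|<r$ (where the sign is immediate) and $|y|>r$ (where it is not), and for the latter carries out an explicit spherical-coordinate computation with the law of sines to show that $\int_{\partial B_R(0)}\frac{x\cdot(x-y)}{|x-y|^{n-\gamma+2}}d\sigma_y$ is nonnegative precisely when $\gamma\ge 2$, with equality iff $\gamma=2$. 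Your route instead reads off $-\Delta|z|^{\gamma-n}\ge 0$ distributionally for $2\le\gamma<n$ (a multiple of $\delta_0$ at $\gamma=2$, a Riesz kernel for $\gamma>2$), concludes $\tilde u$ is superharmonic by convolution with $\overline f\ge 0$, and then invokes the classical fact that the spherical mean of a superharmonic function is non-increasing in the radius. This is shorter and more conceptual, and you correctly flag the regularity issue and offer two adequate fixes (the distributional sub-mean-value theorem, or mollification). The trade-off is that the paper's more hands-on sign computation is not wasted effort: it exhibits a \emph{strict} inequality for $\gamma>2$ and a sign reversal for $\gamma<2$, and it is exactly that pointwise sign analysis of the sphere-averaged kernel which the authors recycle to construct the counterexample in Proposition~\ref{P1} (a super $\tfrac{\alpha}{2}$-harmonic function with strictly increasing spherical average near the origin). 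Your superharmonicity argument proves Proposition~\ref{2-2-1} cleanly but would not immediately yield that refined information.
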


\begin{proof}
	For all $y\in \mathbb{R}^n$, the spherical average 	$\Gamma(r,|y|):=\frac{1}{|\partial B_{r}(0)|}\int_{\partial B_{r}(0)} \frac{1}{|x-y|^{n-\gamma}}d\sigma_x$ depends only on $r=|x|$ and $|y|$, by which we can verify that
	\begin{align}\label{22-2}
	&\quad\overline{u}(r)=\overline{u}(|x|)\nonumber\\\nonumber
	&=\frac{1}{|\partial B_{r}(0)|}\int_{\partial B_{r}(0)}\int_{\mathbb{R}^n} \frac{f(y)}{|x-y|^{n-\gamma}}dyd\sigma_x\\\nonumber
	&=\int_{\mathbb{R}^n}\frac{1}{|\partial B_{r}(0)|}\int_{\partial B_{r}(0)} \frac{f(y)}{|x-y|^{n-\gamma}}d\sigma_xdy\\
	&=\int_{\mathbb{R}^n} f(y)\Gamma(r,|y|)dy\\\nonumber
	&=\int_{\mathbb{R}^n} \overline{f}(|y|)\Gamma(r,|y|)dy\\\nonumber
	&=\frac{1}{|\partial B_{r}(0)|}\int_{\partial B_{r}(0)}\int_{\mathbb{R}^n} \frac{\overline{f}(|y|)}{|x-y|^{n-\gamma}}dy\sigma_x\\\nonumber
	&=\int_{\mathbb{R}^n} \frac{\overline{f}(|y|)}{|x-y|^{n-\gamma}}dy,
	\end{align}
	for any $x\in \mathbb{R}^n$ and $r=|x|$.
	We also used the fact that $\int_{\mathbb{R}^n} \frac{\overline{f}(|y|)}{|x-y|^{n-\gamma}}dy$ depends only on $|x|$.
	
	Taking derivative of \eqref{22-2}, we have
	\begin{equation}\label{22-3}
	\frac{d\overline{u}}{dr}(r)=\frac{x}{|x|}\cdot \nabla \overline{u}(x)=\frac{\gamma-n}{r}\int_{\mathbb{R}^n} \frac{x(x-y)}{|x-y|^{n-\gamma+2}}\overline{f}(|y|)dy,
	\end{equation}
	for all $|x|=r>0$.
	
	Obviously, if $0<R<r$, there holds
	\begin{equation}\label{22-4}
	\int_{\partial B_R(0)}\frac{x(x-y)}{|x-y|^{n-\gamma+2}}d\sigma_y \geq 0.
	\end{equation}
	We claim that, for all $R>r=|x|>0$, if $\gamma>2(=0, <0)$, then
		\begin{equation}\label{22-5}
		\int_{\partial B_R(0)} \frac{x(x-y)}{|x-y|^{n-\gamma+2}}d\sigma_y > 0(=0, <0),
		\end{equation}
	Without loss of generality, we may assume that $r=1$ and $x=e_n$. 
	
	\begin{figure}[ht]
		\includegraphics[scale=0.8]{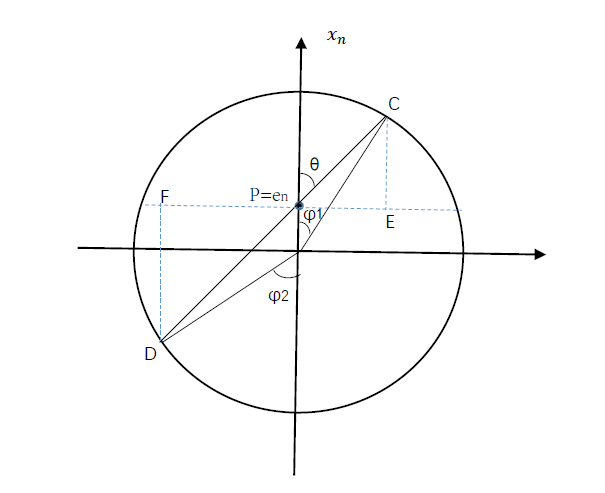} 
		\caption{Spherical coordinate system}
	\end{figure}
	Using the spherical coordinate system with parameter $\theta$ as shown in the figure, we have
	\begin{align}\label{22-6}
	&\int_{\partial B_R(0)} \frac{1-y_n}{|e_n-y|^{n-\gamma+2}}d\sigma_y \nonumber\\
	=&\int_{\partial B_R(0)\cap\{y_n\geq 1\}} \frac{1-y_n}{|e_n-y|^{n-\gamma+2}}d\sigma_y+\int_{\partial B_R(0)\cap\{y_n<1\}} \frac{1-y_n}{|e_n-y|^{n-\gamma+2}}d\sigma_y \\\nonumber
	=&CR\int_0^{\frac{\pi}{2}} -\frac{|PC|\cos\theta}{|PC|^{n-\gamma+2}} (|PC|\sin\theta)^{n-2}\frac{d\varphi_1}{d\theta}(\theta) +\frac{|PD|\cos\theta}{|PD|^{n-\gamma+2}} (|PD|\sin\theta)^{n-2}\frac{d\varphi_2}{d\theta}(\theta) d\theta\\\nonumber
	=&CR\int_0^{\frac{\pi}{2}} -\frac{\cos\theta(\sin\theta)^{n-2}}{|PC|^{3-\gamma}} \frac{d\varphi_1}{d\theta}(\theta)
	+ \frac{\cos\theta(\sin\theta)^{n-2}}{|PD|^{3-\gamma}} \frac{d\varphi_2}{d\theta}(\theta) d\theta.
	\end{align}
	From the law of sines, we have
	\begin{equation}\label{22-7}
	\frac{1}{\sin(\theta-\varphi_1)}=\frac{R}{\sin\theta}=\frac{1}{\sin(\varphi_2-\theta)}.
	\end{equation}
	Denoting $\delta=\theta-\varphi_1=\varphi_2-\theta$, by \eqref{22-7}, we deduce
	\begin{equation}\label{22-8}
	\frac{d\varphi_1}{d\theta}(\theta)=1-\frac{\cos\theta}{R\cos\delta},\qquad \frac{d\varphi_2}{d\theta}(\theta)=1+\frac{\cos\theta}{R\cos\delta}.
	\end{equation}
	Therefore, we obtain
	\begin{align}\label{22-9}
	&\int_{\partial B_R(0)} \frac{1-y_n}{|e_n-y|^{n-\gamma+2}}d\sigma_y\\\nonumber
	=&C\int_0^{\frac{\pi}{2}} \frac{\cos\theta(\sin\theta)^{n-2}(R\cos\delta+\cos\theta)}{|PC|^{3-\gamma}R\cos\delta}\Big[\Big(\frac{|PC|}{|PD|}\Big)^{3-\gamma}-\frac{R\cos\delta-\cos\theta}{R\cos\delta+\cos\theta}\Big]  d\theta.
	\end{align}
	If $\gamma=2$, then by the identity $\frac{1}{\sin\delta}=\frac{R}{\sin\theta}$, one has
	\begin{align}\label{22-10}
	&\frac{|PC|}{|PD|}-\frac{R\cos\delta-\cos\theta}{R\cos\delta+\cos\theta}\nonumber\\\nonumber
	=&\frac{R\cos(\varphi_1)-1}{R\cos(\varphi_2)+1}-\frac{R\cos\delta-\cos\theta}{R\cos\delta+\cos\theta}\\
	=&\frac{R\cos(\theta-\delta)-1}{R\cos(\theta+\delta)+1}-\frac{R\cos\delta-\cos\theta}{R\cos\delta+\cos\theta}\\\nonumber
	=&\frac{2R\cos\delta(\sin^2\theta+\cos^2\theta-1)}{(R\cos(\theta+\delta)+1)(R\cos\delta+\cos\theta)}\\\nonumber
	=&0.
	\end{align}
	Thus, for $\gamma=2$, we have $\int_{\partial B_R(0)} \frac{x(x-y)}{|x-y|^{n-\gamma+2}}d\sigma_y =0$. Since $\frac{|PC|}{|PD|}<1$, we can see that $\Big(\frac{|PC|}{|PD|}\Big)^{3-\gamma}-\frac{R\cos\delta-\cos\theta}{R\cos\delta+\cos\theta}>0$ if $\gamma>2$, and $\Big(\frac{|PC|}{|PD|}\Big)^{3-\gamma}-\frac{R\cos\delta-\cos\theta}{R\cos\delta+\cos\theta}<0$ if $\gamma<2$. This concludes the proof of the claim, and Proposition \ref{2-2-1} follows from \eqref{22-3}, \eqref{22-4} and \eqref{22-5}.
\end{proof}

Now, we are able to derive the decay estimate \eqref{UB2} in the case $k,l\geq1$. By Proposition \ref{2-2-1}, we know that $\overline{u}(r)$ and $\overline{v}(r)$  is decreasing for solution of \eqref{HIE} and satisfies
\begin{equation}\label{22-11}\\\begin{cases}
\overline{u}(|x|)=\int_{\mathbb{R}^n} \frac{R_{2k+\alpha,n}}{|x-y|^{n-2k-\alpha}}|y|^a\overline{v^p}(|y|)dy, \,\,\,\,\,\,\,\, x\in\mathbb{R}^{n}, \\
\overline{v}(|x|)=\int_{\mathbb{R}^n} \frac{R_{2l+\beta,n}}{|x-y|^{n-2l-\beta}}|y|^b\overline{u^q}(|y|)dy, \,\,\,\,\,\,\,\,\, x\in\mathbb{R}^{n}.
\end{cases}\end{equation}
From the first equation in \eqref{22-11}, for $p\geq1$, we have
\begin{eqnarray}\label{22-12}
\overline{u}(R)&=& \int_{\mathbb{R}^{n}}\frac{R_{2k+\alpha,n}}{|x-y|^{n-2k-\alpha}}\overline{|y|^{a}v^p}(|y|)dy
\nonumber\\
 &=& \frac{R_{2k+\alpha,n}}{R^{n-2k-\alpha}}\int_{\mathbb{R}^{n}}\frac{|y|^{a}\overline{v}^{p}(|y|)}{\left|\frac{x}{|x|}-\frac{y}{R}\right|^{n-2k-\alpha}}dy
\\
\nonumber &\geq& \frac{C}{R^{n-2k-\alpha}}\int_{0}^{R}\int_{{\partial}B_{s}(0)}\frac{s^{a}\overline{v}^{p}(s)}{\left|\frac{x}{|x|}-\frac{s\sigma}{R}\right|^{n-2k-\alpha}}d(s\sigma)ds,
\end{eqnarray}
where $R=|x|$ and $\sigma$ is an arbitrary unit vector on $\partial B_{1}(0)$. Observe that
\begin{equation}\label{22-13}
\frac{1}{\left|\frac{x}{|x|}-\frac{s\sigma}{R}\right|^{n-2k-\alpha}}\geq\frac{1}{2^{n-2k-\alpha}},\,\,\,\,\,\,\,\,\,\,\, \forall \,\,s\in[0,R] \,\,\,\text{and} \,\,\, \forall \,\, \sigma\in\partial B_{1}(0).
\end{equation}
Thus we infer from \eqref{22-12} that
\begin{eqnarray}\label{22-14}
\overline{u}(R)
&\geq& \frac{C}{R^{n-2k-\alpha}}\int_{0}^{R}w_{n}s^{n-1+a}\frac{\overline{v}^{p}(s)}{2^{n-2k-\alpha}}ds
\nonumber\\ &\geq& \frac{C}{{R}^{n-2k-\alpha}}\overline{v}^{p}(R)\int_{0}^{R}s^{n-1+a}ds
\\\nonumber &=& CR^{\alpha+2k+a}\overline{v}^{p}(R).
\end{eqnarray}
Similarly, for $q\geq1$, we deduce from the second equation of \eqref{22-11} that
\begin{equation}\label{22-15}
\overline{v}(R)\geq CR^{\beta+2l+b}\overline{u}^{q}(R).
\end{equation}
If $p,q>1$,  then \eqref{UB2} is an easy consequence of \eqref{22-14} and \eqref{22-15}. The proof of Theorem \ref{Upper} is completed.

\subsection{Proof of Proposition \ref{P1} and Theorem \ref{thm2}}
In this subsection, we prove Proposition \ref{P1} and Theorem \ref{thm2}. Proposition \ref{P1} is a direct application of  \eqref{22-5}. In fact, let $f\geq0$ be a smooth radially symmetric function supported in $B_2(0)\setminus B_1(0)$ and $f\not \equiv 0$, then
\begin{align}\label{3-1}
u(x)&=\int_{\mathbb{R}^n} \frac{R_{\alpha,n}f(y)}{|x-y|^{n-\alpha}}dy\\\nonumber
&=\int_{B_2(0)\setminus B_1(0)} \frac{R_{\alpha,n}f(y)}{|x-y|^{n-\alpha}}dy
\end{align}
is the desired example.
One can easily verify that $u$ is radially symmetric and satisfies $(-\Delta)^{\frac{\alpha}{2}}u=f\geq 0$ in $\mathbb{R}^n$. For $0<|x|<1$, there is no singularity in the integration of \eqref{3-1}. Hence, we can take derivative as \eqref{22-3}.   Then \eqref{22-5} implies that $\frac{du}{dr}(r)>0$ for $0<r<1$.

Next, we prove Theorem \ref{thm2}.
If $p=q=1$, then we immediately obtain a contradiction from \eqref{2-9}, \eqref{2-10} and \eqref{22-14}, \eqref{22-15} by taking $R$ large. This proves  \emph{(i)} in Theorem \ref{thm2}.

Since $(u,v)$ satisfies the integral system \eqref{HIE}, we can see that if $(u,v)$ is not identically zero then $u>0, v>0$. Further more,  we can infer from the integral system that
\begin{eqnarray*}
	u(x)&\geq&\int_{|y|\leq\frac{1}{2}}\frac{R_{\alpha,n}}{|x-y|^{n-2k-\alpha}}|y|^av^p(y)dy\\
	\nonumber &\geq& \frac{C}{|x|^{n-2k-\alpha}}\int_{|y|\leq\frac{1}{2}}|y|^av^p(y) dy=:\frac{C}{|x|^{n-2k-\alpha}},
\end{eqnarray*}
for all $|x|\geq1$. Similar argument provides a lower bound for $v$. Therefore, we obtain the following lower bound:
\begin{equation}\label{LB0}
u(x)\geq\frac{C}{|x|^{n-2k-\alpha}},\quad  v(x)\geq\frac{C}{|x|^{n-2l-\beta}},\,\,\,\,\,\,\, \forall \,\,\, |x|\geq1.
\end{equation}
The conclusions \emph{(ii)} and \emph{(iii)} in Theorem \ref{thm2} are direct consequence of the lower bound \eqref{LB0} and Theorem \ref{Upper}. This concludes the proof of Theorem \ref{thm2}.

\section{Proof of Theorem \ref{thm3}}

In this section, we prove Theorem \ref{thm3}. If $\max\{2k+\alpha, 2l+\beta\}\geq n$,  Theorem \ref{thm3} follows from  Theorem 1.13 in \cite{CDQ}. We only need to consider the sub-critical order case $2<2k+\alpha, 2l+\beta <n$, $0\leq p\leq\frac{n+2k+\alpha+2a}{n-2l-\beta}$, $0\leq q\leq\frac{n+2l+\beta+2b}{n-2k-\alpha}$ and $(p,q)\not=(\frac{n+2k+\alpha+2a}{n-2l-\beta},\frac{n+2l+\beta+2b}{n-2k-\alpha})$.

Since we have the integral representation \eqref{HIE} for \eqref{HPDE} by Lemma \ref{HEQ}. We are able to prove Theorem \ref{thm3} by applying the method of scaling spheres in integral form.

Without loss of generality, we assume that $0<p<\frac{n+2k+\alpha+2a}{n-2l-\beta}$ and $0<q\leq\frac{n+2l+\beta+2b}{n-2k-\alpha}$. Given any $\lambda>0$, we define the Kelvin transform of $u$ and $v$ by
    \begin{equation}\label{Kelvin1}
	u_{\lambda}(x)=\left(\frac{\lambda}{|x|}\right)^{n-2k-\alpha}u\left(\frac{\lambda^{2}x}{|x|^{2}}\right),\quad
	v_{\lambda}(x)=\left(\frac{\lambda}{|x|}\right)^{n-2l-\beta}v\left(\frac{\lambda^{2}x}{|x|^{2}}\right),
	\end{equation}
	for arbitrary $x\in\mathbb{R}^{n}\setminus\{0\}$.
	Next, we will carry out the process of scaling spheres with respect to the origin $0\in\mathbb{R}^{n}$.
	
	To this end, let $\lambda>0$ be an arbitrary positive real number and let
	\begin{equation}\label{4-1}
	\omega^{\lambda}_1(x):=u_{\lambda}(x)-u(x),\quad	\omega^{\lambda}_2(x):=v_{\lambda}(x)-v(x),
	\end{equation}
	for any $x\in B_{\lambda}(0)\setminus\{0\}$. Define
	\begin{align}\label{4-3}
	B^{-}_{1,\lambda}:=\{x\in B_{\lambda}(0)\setminus\{0\} \, | \, \omega^{\lambda}_1(x)<0\},\\
	B^{-}_{2,\lambda}:=\{x\in B_{\lambda}(0)\setminus\{0\} \, | \, \omega^{\lambda}_2(x)<0\}.\nonumber
	\end{align}
	The scaling sphere process can be divided into two steps.
	
	\emph{Step 1. Start dilating the sphere from near $\lambda=0$.}
	We will first show that, for $\lambda>0$ sufficiently small,
	\begin{equation}\label{4-2}
	\omega^{\lambda}_1(x),\quad \omega^{\lambda}_2(x)\geq0, \,\,\,\,\,\, \forall \,\, x\in B_{\lambda}(0)\setminus\{0\}.
	\end{equation}
	
	Through direct calculations, for any $x\in\mathbb{R}^{n}$, we get
	\begin{align}\label{4-5}
	u(x)&=C\int_{B_{\lambda}(0)}\frac{|y|^{a}}{|x-y|^{n-2k-\alpha}}v^{p}(y)
	+\frac{|y|^{a}}{\left|\frac{|y|}{\lambda}x-\frac{\lambda}{|y|}y\right|^{n-2k-\alpha}}
	\left(\frac{\lambda}{|y|}\right)^{\tau}v_{\lambda}^{p}(y)dy,\\\nonumber
    u_{\lambda}(x)	&=C\int_{B_{\lambda}(0)}\frac{|y|^{a}}{\left|\frac{|y|}{\lambda}x-\frac{\lambda}{|y|}y\right|^{n-2k-\alpha}}v^{p}(y)+\frac{|y|^{a}}{|x-y|^{n-2k-\alpha}}\left(\frac{\lambda}{|y|}\right)^{\tau}v_{\lambda}^{p}(y)dy,
	\end{align}
	where $\tau:=n+2k+\alpha+2a-p(n-2l-\beta)>0$.
	
	Therefore, for any $x\in B_{1,\lambda}^{-}$, we obtain
	\begin{align}\label{4-8}
	&\quad\omega^{\lambda}_1(x)=u_{\lambda}(x)-u(x) 	\nonumber\\
 &=C\int_{B_{\lambda}(0)}\Bigg(\frac{|y|^{a}}{|x-y|^{n-2k-\alpha}}-\frac{|y|^{a}}{\left|\frac{|y|}{\lambda}x-\frac{\lambda}{|y|}y\right|^{n-2k-\alpha}}\Bigg) \left(\left(\frac{\lambda}{|y|}\right)^{\tau}v_{\lambda}^{p}(y)-v^{p}(y)\right)dy\\
	\nonumber &> C\int_{B_{2,\lambda}^{-}}\Bigg(\frac{|y|^{a}}{|x-y|^{n-2k-\alpha}}-\frac{|y|^{a}}{\left|\frac{|y|}{\lambda}x-\frac{\lambda}{|y|}y\right|^{n-2k-\alpha}}\Bigg)
	\max\left\{v^{p-1}(y),v_{\lambda}^{p-1}(y)\right\}\omega^{\lambda}_2(y)dy\\
	\nonumber &\geq C\int_{B_{2,\lambda}^{-}}\frac{|y|^{a}}{|x-y|^{n-2k-\alpha}}\max\left\{v^{p-1}(y),v_{\lambda}^{p-1}(y)\right\}\omega^{\lambda}_2(y)dy.
	\end{align}
	Similarly, for any $x\in B_{2,\lambda}^{-}$, we have
	\begin{eqnarray}\label{4-9}
	\omega^{\lambda}_2(x) \geq C\int_{B_{1,\lambda}^{-}}\frac{|y|^{b}}{|x-y|^{n-2l-\beta}}\max\left\{u^{q-1}(y),u_{\lambda}^{q-1}(y)\right\}\omega^{\lambda}_1(y)dy.
	\end{eqnarray}
	By Hardy-Littlewood-Sobolev inequality,  we derive from \eqref{4-8} and \eqref{4-9} that
	\begin{eqnarray*}\label{4-10}
	\|\omega^{\lambda}_1\|_{L^{t}(B^{-}_{1,\lambda})}&\leq& C\left\||x|^{a}\max\left\{v^{p-1},v_{\lambda}^{p-1}\right\}\omega^{\lambda}_2\right\|_{L^{\frac{nt}{n+(2k+\alpha)t}}(B^{-}_{2,\lambda})}\\
	\nonumber &\leq& C\left\||x|^{a}\max\left\{v^{p-1},v_{\lambda}^{p-1}\right\}\right\|_{L^{\frac{n}{2k+\alpha}}(B^{-}_{2,\lambda})}\|\omega^{\lambda}_2\|_{L^{t}(B^{-}_{2,\lambda})}
	\end{eqnarray*}
	\begin{eqnarray*}\label{4-11}
	\|\omega^{\lambda}_2\|_{L^{t}(B^{-}_{2,\lambda})}&\leq& C\left\||x|^{b}\max\left\{u^{q-1},u_{\lambda}^{q-1}\right\}\omega^{\lambda}_1\right\|_{L^{\frac{nt}{n+(2l+\beta)t}}(B^{-}_{1,\lambda})}\\
	\nonumber &\leq& C\left\||x|^{b}\max\left\{u^{q-1},u_{\lambda}^{q-1}\right\}\right\|_{L^{\frac{n}{2l+\beta}}(B^{-}_{1,\lambda})}\|\omega^{\lambda}_1\|_{L^{t}(B^{-}_{1,\lambda})},
	\end{eqnarray*}
	for any $\max\{\frac{n}{n-2k-\alpha}, \frac{n}{n-2l-\beta}\}<t<\infty$.

	Therefore, we arrive at
	\begin{align}\label{4-12}
	\|\omega^{\lambda}_1\|_{L^{t}(B^{-}_{1,\lambda})}\leq C_{\lambda, u, v}\|\omega^{\lambda}_1\|_{L^{t}(B^{-}_{1,\lambda})},\\\nonumber
	\|\omega^{\lambda}_2\|_{L^{t}(B^{-}_{2,\lambda})}\leq C_{\lambda, u, v}\|\omega^{\lambda}_2\|_{L^{t}(B^{-}_{2,\lambda})},
	\end{align}
	where $$C_{\lambda, u, v}:=C\left\||x|^{a}\max\left\{v^{p-1},v_{\lambda}^{p-1}\right\}\right\|_{L^{\frac{n}{2k+\alpha}}(B^{-}_{2,\lambda})}\left\||x|^{b}\max\left\{u^{q-1},u_{\lambda}^{q-1}\right\}\right\|_{L^{\frac{n}{2l+\beta}}(B^{-}_{1,\lambda})}.$$
	
	Since \eqref{LB0} implies that
	\begin{equation*}
	\inf_{x\in B_{\lambda}(0)\setminus\{0\}}u_{\lambda}(x)\geq C,\,\inf_{x\in B_{\lambda}(0)\setminus\{0\}}v_{\lambda}(x)\geq C,
	\end{equation*}
	for any $\lambda\leq1$, we have for $0<p<1$ and $0<q<1$,
	\begin{equation*}
	\sup_{x\in B_{\lambda}(0)\setminus\{0\}}u_{\lambda}^{q-1}(x)\leq C,\,\sup_{x\in B_{\lambda}(0)\setminus\{0\}}v_{\lambda}^{p-1}(x)\leq C.
	\end{equation*}
	
	Thus, there exists a $\epsilon_{0}>0$ small enough such that
	\begin{equation}\label{4-13}
	C_{\lambda, u, v}\leq\frac{1}{2}
	\end{equation}
	for all $0<\lambda\leq\epsilon_{0}$.\\
	 Thus, \eqref{4-12} implies
	\begin{equation}\label{4-14}
	\|\omega^{\lambda}_1\|_{L^{t}(B^{-}_{1,\lambda})}=\|\omega^{\lambda}_2\|_{L^{t}(B^{-}_{2,\lambda})}=0, \qquad \forall \,\, 0<\lambda\leq\epsilon_{0},
	\end{equation}
	which means $B^{-}_{1,\lambda}=B^{-}_{2,\lambda}=\emptyset$. Therefore, \eqref{4-2} holds and Step 1 is completed.
	
	\emph{Step 2. Dilate the sphere $S_{\lambda}$ outward until $\lambda=+\infty$. }
	
	In this step, we will dilate the sphere $S_{\lambda}$ outward until $\lambda=+\infty$ to derive lower bound estimates on $u$ and $v$. Step 1 provides us a start point to dilate the sphere $S_{\lambda}$ from place near $\lambda=0$. Now we dilate the sphere $S_{\lambda}$ outward as long as \eqref{4-2} holds. Let
	\begin{equation}\label{4-15}
	\lambda_{0}:=\sup\{\lambda>0\,|\, \omega^{\mu}_1\geq0,\,\omega^{\mu}_2\geq0 \,\, in \,\, B_{\mu}(0)\setminus\{0\}, \,\, \forall \, 0<\mu\leq\lambda\}\in(0,+\infty].
	\end{equation}
Hence, one has
	\begin{equation}\label{4-16}
	\omega^{\lambda_{0}}_1(x)\geq0,\quad \omega^{\lambda_{0}}_2(x)\geq0, \quad\quad \forall \,\, x\in B_{\lambda_{0}}(0)\setminus\{0\}.
	\end{equation}
	Suppose $0<\lambda_{0}<+\infty$. We will first show that
	\begin{equation}\label{4-17}
	\omega^{\lambda_{0}}_1(x)>0, \quad \omega^{\lambda_{0}}_2(x)>0, \,\,\,\,\,\, \forall \, x\in B_{\lambda_{0}}(0)\setminus\{0\}.
	\end{equation}
	Indeed, if we assume
	\begin{equation}\label{4-18}
	\omega^{\lambda}_1(x^0)=0, \qquad \text{for some} \,\, x^0\in B_{\lambda_{0}}(0)\setminus\{0\},
	\end{equation}
	then by the second equality in \eqref{4-8} and \eqref{4-16}, we arrive at
	\begin{align}\label{4-19}
	&\quad 0=\omega^{\lambda_{0}}(x^0)=u_{\lambda_{0}}(x^0)-u(x)\\
	\nonumber &\geq C\int_{B_{\lambda_{0}}(0)}\Bigg(\frac{|y|^{a}}{|x^0-y|^{n-2k-\alpha}}-\frac{|y|^{a}}{\left|\frac{|y|}{\lambda_{0}}x^0-\frac{\lambda_{0}}{|y|}y\right|^{n-2k-\alpha}}\Bigg) \left(\left(\frac{\lambda_{0}}{|y|}\right)^{\tau}-1\right)v^{p}(y)dy>0,
	\end{align}
	 which is absurd. Thus, we must have $\omega^{\lambda_{0}}_1(x)>0$. Similarly, one can prove that $ \omega^{\lambda_{0}}_2(x)>0$  and \eqref{4-17} holds true. Furthermore, by  continuity, on a fixed small ball $B_{\delta}(x^{0})\subset B_{\lambda_{0}}(0)\setminus\{0\}$,  we have $\omega^{\lambda_{0}}_1(x)>c,\quad \omega^{\lambda_{0}}_2(x)>c$ for some constant $c>0$. Then, we can derive from \eqref{4-8} and \eqref{4-9} that there exists a $0<\eta<\lambda_{0}$ small enough such that, for any $x\in \overline{B_{\eta}(0)}\setminus\{0\}$,
	\begin{equation}\label{4-20}
	\omega^{\lambda_{0}}_1(x),\quad \omega^{\lambda_{0}}_2(x)>c_0>0.
	\end{equation}
	
	We fix $0<r_{0}<\frac{1}{2}\lambda_{0}$ small enough, such that
	\begin{equation}\label{4-21}
	C\left\||x|^{a}\max\left\{v^{p-1},v_{\lambda}^{p-1}\right\}\right\|_{L^{\frac{n}{2k+\alpha}}(A_{\lambda_{0}+r_{0},2r_{0}})}\left\||x|^{b}\max\left\{u^{q-1},u_{\lambda}^{q-1}\right\}\right\|_{L^{\frac{n}{2l+\beta}}(A_{\lambda_{0}+r_{0},2r_{0}})}\leq\frac{1}{2}
	\end{equation}
	for any $\lambda\in[\lambda_{0},\lambda_{0}+r_{0}]$, where $C$ is the constant in \eqref{4-12} and the narrow region
	\begin{equation}\label{4-22}
	A_{\lambda_{0}+r_{0},2r_{0}}:=\left\{x\in B_{\lambda_{0}+r_{0}}(0)\,|\,|x|>\lambda_{0}-r_{0}\right\}.
	\end{equation}
	
	We infer from \eqref{4-17} and \eqref{4-20} that
	\begin{equation}\label{4-24}
	m_{1}:=\inf_{x\in\overline{B_{\lambda_{0}-r_{0}}(0)}\setminus\{0\}}\omega^{\lambda_{0}}_1(x)>0,\quad m_{2}:=\inf_{x\in\overline{B_{\lambda_{0}-r_{0}}(0)}\setminus\{0\}}\omega^{\lambda_{0}}_2(x)>0.
	\end{equation}
	Note that \eqref{4-24} is equivalent to
	\begin{equation}\label{4-24-1}
	|x|^{n-2k-\alpha}u(x)-\lambda_{0}^{n-2k-\alpha}u(x^{\lambda_{0}})\geq m_{1}\lambda_{0}^{n-2k-\alpha}, \,\,\,\,\,\,\,\,\, \forall \, |x|\geq\frac{\lambda_{0}^{2}}{\lambda_{0}-r_{0}}.
	\end{equation}
	Since $u$ is uniformly continuous on $\overline{B_{4\lambda_{0}}(0)}$, we infer from \eqref{4-24-1} that there exists a $0<\varepsilon_{1}<r_{0}$ sufficiently small, such that, for any $\lambda\in[\lambda_{0},\lambda_{0}+\varepsilon_{1}]$,
	\begin{equation}\label{4-24-2}
	|x|^{n-2k-\alpha}u(x)-\lambda^{n-2k-\alpha}u(x^{\lambda})\geq \frac{m_{1}}{2}\lambda^{n-2k-\alpha}, \,\,\,\,\,\,\,\,\, \forall \, |x|\geq\frac{\lambda^{2}}{\lambda_{0}-r_{0}},
	\end{equation}
	which is equivalent to
	\begin{equation}\label{4-25}
	\omega^{\lambda}_1(x)\geq\frac{m_{1}}{2}>0, \quad \omega^{\lambda}_2(x)\geq\frac{m_{1}}{2}>0,\,\,\,\,\,\, \forall \, x\in\overline{B_{\lambda_{0}-r_{0}}(0)}\setminus\{0\}.
	\end{equation}
	Similar argument indicates that there exists a $0<\varepsilon_{2}<r_{0}$ sufficiently small, such that, for any $\lambda\in[\lambda_{0},\lambda_{0}+\varepsilon_{2}]$,
	\begin{equation}\label{4-25-1}
	\omega^{\lambda}_2(x)\geq\frac{m_{2}}{2}>0,\,\,\,\,\,\, \forall \, x\in\overline{B_{\lambda_{0}-r_{0}}(0)}\setminus\{0\}.
	\end{equation}
	Set $\varepsilon_3=\min\{\varepsilon_{1},\varepsilon_{2}\}$. By \eqref{4-25} and \eqref{4-25-1}, we know that for any $\lambda\in[\lambda_{0},\lambda_{0}+\varepsilon_{3}]$,
	\begin{equation}\label{4-26}
	B_{1,\lambda}^{-}\cup B_{2,\lambda}^{-} \subset A_{\lambda_{0}+r_{0},2r_{0}}.
	\end{equation}
	Hence, \eqref{4-12}, estimates \eqref{4-21} and \eqref{4-26} yield
	\begin{equation}\label{4-27}
	\|\omega^{\lambda}_1\|_{L^{t}(B^{-}_{1,\lambda})}=\|\omega^{\lambda}_2\|_{L^{t}(B^{-}_{2,\lambda})}=0, \qquad \forall \,\, 0<\lambda\leq\epsilon_{0}
	\end{equation}
	Therefore, for any $\lambda\in[\lambda_{0},\lambda_{0}+\varepsilon_{3}]$, we deduce from \eqref{4-27} that, $B^{-}_{1,\lambda}=B^{-}_{2,\lambda}=\emptyset$. Thus, for any $\lambda\in[\lambda_{0},\lambda_{0}+\varepsilon_{3}]$, we conclude that
	\begin{equation}\label{4-28}
	\omega^{\lambda}_1(x)\geq0, \,\,\,\,\,\,\omega^{\lambda}_2(x)\geq0, \,\,\,\,\,\,\, \forall \,\, x\in B_{\lambda}(0)\setminus\{0\},
	\end{equation}
	which contradicts with the definition \eqref{4-15} of $\lambda_{0}$. Thus we must have $\lambda_{0}=+\infty$, that is,
	\begin{equation}\label{4-29}
	u(x)\geq\left(\frac{\lambda}{|x|}\right)^{n-2k-\alpha}u\left(\frac{\lambda^{2}x}{|x|^{2}}\right),\quad
	v(x)\geq\left(\frac{\lambda}{|x|}\right)^{n-2l-\beta}u\left(\frac{\lambda^{2}x}{|x|^{2}}\right),
	\end{equation}
	for all $|x|\geq\lambda$ and $0<\lambda<+\infty$.
	For arbitrary $|x|\geq1$, let $\lambda:=\sqrt{|x|}$. Then \eqref{4-29} yields the following lower bound estimate:
	\begin{align}\label{4-30}
	u(x)&\geq\left(\min_{x\in S_{1}}u(x)\right)\frac{1}{|x|^{\frac{n-2k-\alpha}{2}}}:=\frac{C_{1}}{|x|^{\frac{n-2k-\alpha}{2}}},\\
	v(x)&\geq\left(\min_{x\in S_{1}}v(x)\right)\frac{1}{|x|^{\frac{n-2l-\beta}{2}}}:=\frac{C_{2}}{|x|^{\frac{n-2l-\beta}{2}}}, \quad\quad \forall \,\, |x|\geq1\nonumber.
	\end{align}
	
	Denoting $\mu_{u,0}:=\frac{n-2k-\alpha}{2}, \quad\mu_{v,0}:=\frac{n-2l-\beta}{2} $ , we infer from the first integral equation in \eqref{HIE} and \eqref{4-30} that for $|x|\geq1$,
	\begin{equation*}
	u(x)\geq C\int_{2|x|\leq|y|\leq4|x|}\frac{1}{|x-y|^{n-2k-\alpha}|y|^{p\mu_{v,0}-a}}dy
	\geq \frac{C_{1}}{|x|^{\mu_{u,1}}},
	\end{equation*}
	where $\mu_{u,1}:=p\mu_{v,0}-(a+2k+\alpha)$. Similarly, we obtain that $v\geq \frac{C}{|x|^{\mu_{v,1}}} $ with $\mu_{v,1}=q\mu_{u,0}-(b+2l+\beta)$.
	For $i=0,1,2,\cdots$, we define
	\begin{equation*}
	\mu_{u,i+1}:=p\mu_{v,i}-(a+2k+\alpha),\quad \mu_{v,i+1}:=q\mu_{u,i}-(b+2l+\beta).
	\end{equation*}
	Then, we have
	\begin{align}\label{4-31}
	\mu_{u,i+2}:=pq\mu_{u,i}-p(b+2l+\beta)-(a+2k+\alpha),\\
	\mu_{v,i+2}:=pq\mu_{v,i}-q(a+2k+\alpha)-(b+2l+\beta)\nonumber.
	\end{align}
	If $pq=1$, then obviously $\mu_{u,2j}, \mu_{v,2j}\rightarrow -\infty$ as $j\rightarrow +\infty$. For $pq\not=1$, noticing that  $\frac{n-2k-\alpha}{2}-\frac{p(n+2l+\beta)+(a+2k+\alpha)}{pq-1}<0$ and $\frac{n-2l-\beta}{2}-\frac{q(a+2k+\alpha)+(b+2l+\beta)}{pq-1}<0$, one can check that
	for $pq>1$,  $\mu_{u,2j}, \mu_{v,2j}\rightarrow -\infty$ as $j\rightarrow +\infty$, and for $pq<1$
	\begin{align*}
	\mu_{u,2j}&\rightarrow \frac{p(b+2l+\beta)+(a+2k+\alpha)}{pq-1}<0,\\
	\mu_{v,2j}&\rightarrow \frac{q(a+2k+\alpha)+(b+2l+\beta)}{pq-1}<0,
	\end{align*}
	as $ j\rightarrow +\infty$.\\
	
In all cases, it is obvious that the lower bound for $|x|>1$ and $j$ large
$$u(x)\geq \frac{C}{|x|^{\mu_{u,2j}}},\quad v(x)\geq \frac{C}{|x|^{\mu_{v,2j}}},$$
contradicts with the decay estimate \eqref{UB}. This concludes the proof of Theorem \ref{thm3}.
\bibliographystyle{amsplain}

\end{document}